\newtheorem{theorem}{Theorem}[section]
\newtheorem{lemma}[theorem]{Lemma}
\newtheorem{definition}[theorem]{Definition}
\newtheorem{corollary}[theorem]{Corollary}
\newtheorem{remark}[theorem]{Remark}
\newtheorem{conjecture}[theorem]{Conjecture}
\title{Bounded cutoff window for the non-backtracking 
random walk on Ramanujan Graphs}
\author{Evita Nestoridi$^{\ast}$ \and Peter Sarnak$^{\ast}$}
\date{\today}
\begin{document}
\tikzset{
    arrow/.style={postaction={decorate},
        decoration={markings,mark=at position 1 with
        {\arrow[line width=0.6mm]{>}}}} 
       }
\maketitle
\begin{abstract} 
We prove that the non-backtracking random walk on Ramanujan graphs with large girth exhibits the fastest possible cutoff with a bounded window.
\end{abstract}

\phantom{.} \hspace{0.2cm}
%\textbf{Keywords:} \\

\let\thefootnote\relax
\footnotetext{ $^{\ast}$ \textit{Princeton University, United States, E-Mail :exn@princeton.edu,  sarnak@math.princeton.edu} \\ 
\phantom{.} \hspace{0.3cm} 
}

\section{Introduction}
Fix $d \geq 3$, which we write as $p+1$, and consider $d$-regular graphs $X$ on $n$ vertices with $n \rightarrow \infty$. During the last decade, there has been a lot of interest in studying the simple random walk (SRW) and the non-backtracking random walk (NBRW) on such graphs. The focus has been to understand mixing times and related cutoff phenomena \cite{CP, Hermon, LP, LLP, ON, Sardari}. The non-backtracking random walk was introduced by Hashimoto \cite{Has}, it mixes faster, has sharper transitions and has been very useful in multiple cases \cite{ABLS, BLM, BS, FH, GK, Hermon2, HSS, OW}. We focus exclusively on the NBRW on $X$, which is defined as follows:
\[K_t(x,y)=\# \bigg \lbrace (x=x_1, x_2,\ldots, x_t=y) \bigg \vert \substack{ x_i \in X \\ x_i\sim x_{i+1}\\  x_{i-1}\neq x_{i+1}} \bigg \rbrace ,\]
where $x_i \sim x_{i+1}$ indicates that $(x_i, x_{i+1})$ is an edge.

Let $N(t):= \sum_y K_t(x,y) =(p+1)p^{t-1}$ and let $P^t(x,y)= \frac{1}{N(t)} K_t(x,y)$ be the transition matrix of the non-backtracking random walk on $G$. The total variation of $P^t_x= P^t(x, \cdot)$ from the  uniform measure is defined as 
$d_x(t):= \frac{1}{2} \sum_{y \in X} \bigg \vert P^t(x,y) - \frac{1}{n} \bigg \vert $. 
We will also consider the total variation distance when starting at the worst possible starting point
\[ d(t):=\max_{x \in X} \{ d_x(t) \} .\]
For  $0<\eta <1$, the total variation mixing time is defined as
\[t_{mix}(\eta)= \min \{t \geq 0 :  d(t) \leq \eta\}.\]
The main focus of this paper is studying the cutoff phenomenon. We say that the NBRW on $X$ exhibits cutoff at $t_n$ with window $w_n$ if
\begin{equation}\label{cutoff}
\lim_{c \rightarrow \infty} \lim_{n \rightarrow \infty} d \left(t_n - c w_n \right)= 1 \mbox{ and } \lim_{c \rightarrow \infty} \lim_{n \rightarrow \infty} d \left(t_n + cw_n \right)= 0.
\end{equation}

If $N(t) \leq n$ one checks that
$d_x(t)= \frac{U_x(t)}{n},$
where $U_x(t)$ is the number of vertices that are not reached by the walk at time t, when starting at $x$. Hence, 
\begin{equation*}
d(t) \geq 1- \frac{N(t)}{n},
\end{equation*}
if $N(t) \leq n$, which implies that 
\begin{equation}\label{lowercut}
t_{mix}(1- \eta) \geq \log_p n - \log_p \eta^{-1}. 
\end{equation}
This gives an absolute lower limit in \eqref{cutoff} for the cutoff time $t_n=\log_p n$ and bounded $w_n$ and we are interested in graphs $X$ for which this $t_n$ is indeed the cutoff time for the NBRW.

We will search for such $X$ among different types of expanders. For $\lambda<d$ an $(n, d , \lambda)$ graph $X$ is a $d$ regular graph on $n$ nodes for which the eigenvalues $\{\lambda_{j} \}_{j=0}^n$ of the adjacency matrix of $X$ satisfy
\[   
     \begin{cases}
       \lambda_0= d:=p+1 &\quad\text{if} \quad  j = 0 \\
      \vert \lambda_j \vert \leq \lambda &\quad\text{if} \quad  j \neq 0 .
     \end{cases}
\]
If $\lambda= 2 \sqrt{p}$ then $X$ is called a Ramanujan graph.

The key results in this direction are due to Lubetzky and Sly \cite{LP} and Lubetzky and Peres \cite{LS}. In the first, it is shown that for the random $d$-regular graph 
\begin{equation}\label{upcut}
t_{mix}(\eta) \leq \log_p n + 3 \log \eta^{-1} +4. 
\end{equation}
Equations \eqref{lowercut} and \eqref{upcut} show that for such graphs the NBRW exhibits total variation cutoff at $\log_p n$ with a bounded window. In the second one, $X$ is assumed to be Ramanujan, and they show that the NBRW on any Ramanujan graph exhibits cutoff at $\log_p n$, but whether it occurs with a bounded window is not resolved. More precisely, they show that  
\begin{equation*}
d(t) \leq \frac{1}{\log n}, 
\end{equation*}
for every $t > \log_p n + 3\log_p \log n$. As a corollary they also prove a purely combinatorial fact about the almost diameter. For Ramanujan graphs, for any $x$ we have that
\begin{equation}\label{diameter}
 \# \{ y \in X : \vert \mbox{dist}(x,y) - \log_p n \vert > 3 \log_p \log n\}= o(n). 
\end{equation}
The same result was independently proven by Sardari in \cite{Sardari}. 

Our first result shows that the NBRW on a Ramanujan graph with large girth $g$ exhibits cutoff with a bounded window.
\begin{theorem}\label{Ram2}
Fix $\delta>0$. The NBRW on a Ramanujan graph with $g \geq \delta \log_p n$ satisfies
\[ t_{mix}(\varepsilon) \leq \log_p n +2 \log_p \varepsilon^{-1} + 2 \log_p (2 +20 \delta^{-1}),  \]
for every $\varepsilon>0$.
\end{theorem}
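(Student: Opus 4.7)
The strategy is to obtain a uniform $L^2$ estimate
\[
\pdist{P^t_x - \pi}{2}^2 \leq \frac{C(\delta)}{N(t)}
\]
with $C(\delta) = O((1+1/\delta)^2)$, depending only on $\delta$ and $p$; converting via $\TV{\cdot} \leq \tfrac{\sqrt{n}}{2}\pdist{\cdot}{2}$ then gives $\TV{P^t_x - \pi} \leq \varepsilon$ as soon as $N(t) \geq nC(\delta)/(4\varepsilon^2)$, i.e.\ $t \geq \log_p n + 2\log_p \varepsilon^{-1} + \log_p(C(\delta)/4)$. Matching the constant in the theorem amounts to keeping $\sqrt{C(\delta)}/2 \leq 2+20/\delta$.

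\textbf{Step 1 (Spectral identity).} Write $K_t = P_t(A)$ where the polynomials $\{P_t\}$ satisfy $P_0 = 1$, $P_1 = \lambda$, $P_2 = \lambda^2 - (p+1)$, and $P_{t+1} = \lambda P_t - pP_{t-1}$ for $t \geq 2$. These are the orthogonal polynomials for the Kesten--McKay measure $\nu$ of the $(p+1)$-regular tree, normalized so that $\int P_k P_\ell \, d\nu = N(k)\delta_{k\ell}$. By Parseval in the eigenbasis $\{\phi_j\}$ of $A$,
\[
\pdist{P^t_x - \pi}{2}^2 = \frac{1}{N(t)^2}\Big(\sum_y K_t(x,y)^2 - \frac{N(t)^2}{n}\Big),
\]
so it suffices to bound $S(x,t) := \sum_y K_t(x,y)^2 \leq C(\delta) N(t)$ uniformly.

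\textbf{Step 2 (Tree regime).} For $2t < g$, the ball of radius $t$ around $x$ is isometric to the corresponding tree ball, so distinct NB walks of length $t$ from $x$ reach distinct endpoints, $K_t(x,\cdot) \in \{0,1\}$, and $S(x,t) = N(t)$ exactly. This handles all $t \leq \lfloor (g-1)/2\rfloor$.

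\textbf{Step 3 (Long-time expansion).} For $2t \geq g$, expand $P_t^2$ in the orthogonal basis: using the Chebyshev product formula $U_sU_t = \sum_{r=0}^{\min(s,t)} U_{s+t-2r}$ applied to $V_k(\lambda) = p^{k/2}U_k(\lambda/(2\sqrt{p}))$ together with the identity $P_t = V_t - V_{t-2}$, one obtains an explicit expansion $P_t^2 = \sum_{k=0}^{2t} c_{t,k} P_k$ with $c_{t,0} = N(t)$ and $|c_{t,k}| \lesssim p^{t-k/2}$. Integrating against the spectral measure $\mu_x = \sum_j \phi_j(x)^2 \delta_{\lambda_j}$ at $x$, and using $\int P_k \, d\mu_x = (K_k)_{xx}$ = the number of NB closed walks of length $k$ at $x$,
\[
S(x,t) = N(t) + \sum_{k=1}^{2t} c_{t,k}\,(K_k)_{xx}.
\]

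\textbf{Step 4 (Girth plus Ramanujan).} The girth hypothesis $g \geq \delta\log_p n$ forces $(K_k)_{xx} = 0$ for $1 \leq k < g$, so the sum runs only over $g \leq k \leq 2t$. The Ramanujan property combined with the explicit trigonometric formula
\[
P_k(2\sqrt{p}\cos\theta) = \frac{p^{k/2}}{\sin\theta}\bigl[\sin((k+1)\theta) - p^{-1}\sin((k-1)\theta)\bigr]
\]
yields the pointwise spectral bound $|(K_k)_{xx} - N(k)/n| \leq (k+1)p^{k/2}$. The remaining task is to show
\[
\Big|\sum_{k=g}^{2t} c_{t,k}\,(K_k)_{xx}\Big| \leq (C(\delta) - 1)\,N(t),
\]
where the range of $k$ has length at most $2t - g + 1 \leq (2-\delta)\log_p n + O(1)$ thanks to the girth hypothesis.

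\textbf{Main obstacle.} Step 4 is the crux of the argument. A naive term-by-term triangle-inequality application of the Chebyshev-type bound $|P_k(\lambda_j)| \leq (k+1)p^{k/2}$ at the spectral edges $\pm 2\sqrt{p}$ introduces a spurious $(\log n)^2$ factor --- this is exactly the mechanism that forces the $\log\log n$ window in the results of Lubetzky--Peres and Sardari. The technical heart of the proof must exploit cancellation: the coefficients $c_{t,k}$ carry oscillatory signs coming from the Chebyshev structure, so regrouping terms (rather than taking absolute values) yields an improved tail bound. The girth hypothesis must be used twice --- once to restrict the number of tail terms, and once to absorb the remaining $\log n$ factors into a constant $C(\delta) = O((1+1/\delta)^2)$ matching the stated $2 + 20/\delta$.
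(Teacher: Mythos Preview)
Your setup (Steps 1--3) is sound and leads to the same $L^2$ quantity the paper bounds, but Step 4 is not a proof: you correctly name the obstacle --- the spectral-edge factor $(t+1)^2$ coming from $|U_t(\cos\theta)|\leq t+1$ --- and then assert that ``regrouping terms yields an improved tail bound'' without performing any such regrouping. Restricting the sum to $g\leq k\leq 2t$ still leaves $O(\log n)$ terms, each of size $\lesssim (k+1)p^t$ under the triangle inequality, so your outlined route reproduces exactly the $t^2 p^t$ bound of Lubetzky--Peres. No concrete mechanism for the claimed cancellation is given, and the vague phrase ``the girth hypothesis must be used twice'' does not supply one.

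The paper's resolution is different in kind and bypasses the linearization $P_t^2=\sum_k c_{t,k}P_k$ entirely. It works directly with the spectral sum $\sum_{j\neq 0} U_t(\cos\theta_j)^2 f_j(x)^2$ and uses an algebraic factorization of the Dirichlet kernel: write $t+1=mk+r$ with $k=\lfloor 10/\delta\rfloor+1$ and $0\leq r<k$, so that $m\leq g/5$, and use
\[
\Bigl|\frac{\sin(mk\theta)}{\sin\theta}\Bigr|=\Bigl|\frac{\sin(mk\theta)}{\sin(m\theta)}\Bigr|\cdot\Bigl|\frac{\sin(m\theta)}{\sin\theta}\Bigr|\leq k\,|U_{m-1}(\cos\theta)|.
\]
This reduces the degree-$t$ Chebyshev polynomial to one of degree $m-1\leq g/5$, times a constant factor $k\asymp 1/\delta$. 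The point is that for $\ell\leq g/5$ one is in the tree regime (your Step 2), and there the paper proves the clean pointwise bound $\sum_{j\neq 0} U_\ell(\cos\theta_j)^2 f_j(x)^2\leq 2$ (Lemma 4.2). Assembling these gives $W(Q_t,x)\leq 12(1+10/\delta)^2 p^t$ directly, with no cancellation argument needed. The missing idea in your proposal is precisely this factorization, which is what converts the large-girth hypothesis into a removal of the $(t+1)^2$ factor.
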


\begin{remark}
The girth condition of Theorem \ref{Ram2} is satisfied for Ramanujan graphs \cite{LPS} with $\delta= 2/3 $. This shows that the NBRW on these graphs exhibits cutoff with a bounded window, which was one of our goals in this note.
\end{remark}
It is important to note that most of the examples (other than the result of Lubetzky and Sly \cite{LS}) that are known where the cutoff window is bounded are non-local Markov chains, such as riffle shuffles \cite{BD} and random transvections \cite{Hild}.

Next, we discuss what can be said about cutoff if we drop the Ramanujan condition. Writing the eigenvalues in form
\[\lambda_j = 2 \sqrt{p} \cos \theta_j ,\]
where for $\vert \lambda_j  \vert \leq 2 \sqrt{p}$, we have a unique $\theta_j \in [0, \pi]$, and otherwise for the "exceptional" eigenvalues we choose $\theta_j$ uniquely in the from 
\[   
     \begin{cases}
       \theta_j= i \phi_j \log p&\quad\text{if} \quad  \lambda_j  > 2 \sqrt{p} \\
       \theta_j = \pi + i \psi_j \log p  &\quad\text{if} \quad  \lambda_j  < - 2 \sqrt{p},
     \end{cases}
\]
with $\phi_j, \psi_j \in (0, \frac{1}{2})$.
\begin{definition}\label{S-X}
A sequence of graphs $X$ is said to satisfy the density hypothesis if for every $0\leq \alpha < 1/2$ and $\varepsilon >0$, the number of exceptional eigenvalues $M$ satisfies
\[M(\alpha,X):= \# \{ j: \phi_j \geq \alpha \} + \# \{ j: \psi_j \geq \alpha \}  \ll_{\varepsilon} n^{1-2 \alpha + \varepsilon}. \]
\end{definition}
For a discussion of this density hypothesis see \cite{Sa} and \cite{GoKa}. The point is that this density can often be established in cases where the Ramanujan is not known or even fails. 

In \cite{BoLa} and \cite{GoKa} it is shown that the density together with the assumption that $X$ is an expander suffice to show that the SRW on $X$ exhibits cutoff at $\frac{p+1}{p-1} \log_p n$. We show that the shortest possible cutoff applies to the NBRW.

\begin{theorem}\label{SX}
Let $X$ be a homogeneous sequence  (that is the automorphisms act transitively on the vertices)  of $(n,d, \lambda)$ expander graphs which satisfy the density hypothesis. Then the NBRW on $X$ exhibits cutoff at $\log_p n$. That is, 
\[ d((1+ \eta) \log_p n) \rightarrow 0,\]
for every $\eta>0$.
\end{theorem}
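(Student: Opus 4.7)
The plan is to run the standard $L^2$ argument adapted to the NBRW, with the density hypothesis controlling the contribution of the exceptional spectrum. First, Cauchy--Schwarz gives $d_x(t)^2 \le \frac{n}{4}\sum_y (P^t(x,y)-1/n)^2$. Because $\mathrm{Aut}(X)$ acts transitively on vertices, the spectral projector onto each adjacency eigenspace commutes with this action, so $\sum_{j:\lambda_j=\mu}|\phi_j(x)|^2$ is independent of $x$; expanding $K_t$ in an orthonormal adjacency eigenbasis $\{\phi_j\}$ and applying Parseval then reduces the $L^2$ distance to a sum over eigenvalues:
$$d_x(t)^2 \;\le\; \frac{1}{4\,N(t)^2}\sum_{j\ge 1}|q_t(\lambda_j)|^2,$$
where $q_t$ is the universal polynomial giving non-backtracking walk counts in terms of adjacency eigenvalues, and $N(t)=(p+1)p^{t-1}$.

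Next I would substitute the explicit Chebyshev form of $q_t$ coming from the Hashimoto/Ihara recursion $W_{t+1}=W_tA-pW_{t-1}$. Writing $\lambda=2\sqrt{p}\cos\theta$, one obtains $|q_t(\lambda)|\ll t\,p^{t/2}$ in the Ramanujan bulk $|\lambda|\le 2\sqrt{p}$, and, after passing to hyperbolic variables, $|q_t(\lambda)|^2\ll p^{t(1+2\phi)}$ for an exceptional eigenvalue of parameter $\phi$ (symmetrically for $\psi$). Setting $t=(1+\eta)\log_p n$, the bulk eigenvalues contribute $\ll n\cdot t^2\cdot p^t/p^{2t}=t^2/n^{\eta}\to 0$, which already handles the Ramanujan portion of the spectrum.

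For the exceptional part I would dyadically partition $[0,\phi_{\max}]$ into thin slabs $[\alpha_k,\alpha_{k+1}]$. On such a slab Definition \ref{S-X} allows at most $n^{1-2\alpha_k+\varepsilon}$ eigenvalues, each of which contributes (after division by $N(t)^2$) a factor $p^{t(2\alpha_{k+1}-1)}=n^{(1+\eta)(2\alpha_{k+1}-1)}$. A short calculation shows that in the fine-mesh limit the contribution of a slab is of order $n^{\varepsilon-\eta(1-2\alpha)}$. Here the expander hypothesis plays the decisive role: the uniform $\lambda$-gap forces $\phi_{\max}<1/2$, so $1-2\alpha$ stays bounded below by a fixed positive constant, and choosing $\varepsilon<\eta(1-2\phi_{\max})$ makes every slab polynomially small in $n$; summing over the $O(1)$ slabs still gives $o(1)$.

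The main obstacle, and the reason the two hypotheses must be combined, is the regime $\phi\uparrow 1/2$: there the density bound $n^{1-2\alpha+\varepsilon}$ degenerates to $n^{\varepsilon}$ while $|q_t(\lambda)|^2$ saturates the maximum $p^{2t}$, so density alone would give only $O(n^{\varepsilon})$ instead of $o(1)$. The spectral gap of the expander closes off that regime outright, and once this observation is in place the remainder is routine bookkeeping of the bulk and exceptional contributions.
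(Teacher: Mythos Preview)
Your proposal is correct and follows essentially the same route as the paper: Cauchy--Schwarz, the homogeneity reduction $\sum_j f_j(x)^2\mapsto 1/n$, the Chebyshev expression for $Q_t$, the bulk bound $|Q_t(\lambda)|\ll t\,p^{t/2}$, and then the density hypothesis combined with the expander gap $\phi_{\max}<\tfrac12$ to kill the exceptional contribution. The only cosmetic difference is that the paper packages the exceptional sum as $I_n=\sum_j p^{-(1-2\phi_j)t}$ and evaluates it by Stieltjes integration by parts against $M(\alpha)$ (Lemma~\ref{eps}), whereas you do the equivalent discrete step of partitioning $[0,\phi_{\max}]$ into fixed-width slabs and bounding each via $M(\alpha_k)\ll n^{1-2\alpha_k+\varepsilon}$; both yield the exponent $n^{\varepsilon-\eta(1-2\alpha)+O(\delta)}$, which is uniformly negative once $\varepsilon$ and the mesh $\delta$ are chosen small relative to $\eta(1-2\phi_{\max})$.
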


The next results focus on the diameter of $(n, d, \lambda)$ graphs and strengthens \eqref{diameter}.  
Let $\mathcal{N}_x(\ell)$ be the number of vertices $y\in X$ such that $d(x,y)> \ell$.
\begin{theorem}\label{expa}
Let $X$ be an $(n, d , \lambda)$ graph; then for $\xi>0$ we have that  
\[\max_{x\in X} \bigg \{  \frac{1}{n}\mathcal{N}_x \left( \frac{1}{2}\log_b n + \xi \right) \bigg\}\leq \frac{4}{b^{2 \xi}} ,\]
where $b= \frac{d}{ \lambda} + \sqrt{ \left(\frac{d}{ \lambda} \right)^2 -1 } $.
\end{theorem}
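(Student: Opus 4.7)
The plan is to control $\mathcal{N}_x(\ell)$ via the classical extremal polynomial method applied to the adjacency matrix $A$ of $X$, going back to Chung's bound on the diameter of expanders. The starting observation is that for any real polynomial $P$ of degree at most $k$, the entry $P(A)_{x,y}$ vanishes whenever $d(x,y)>k$, since $(A^m)_{x,y}=0$ for every $m<d(x,y)$. I would write the spectral decomposition $A=\sum_{j\geq 0} \lambda_j v_j v_j^{T}$ with $v_0=n^{-1/2}(1,\ldots,1)$, and sum the identity $P(A)_{x,y}=0$ over $y \in S_x:=\{y:d(x,y)>k\}$, obtaining
\[
\frac{|S_x|\,P(d)}{n} = -\sum_{j\geq 1} P(\lambda_j)\, v_j(x) \sum_{y\in S_x} v_j(y).
\]
Applying Cauchy--Schwarz to the right-hand side, using $|\lambda_j|\leq \lambda$ for $j\geq 1$, the orthonormality relation $\sum_j v_j(x)^2=1$, and the Parseval-type identity $\sum_{j\geq 1}(\sum_{y\in S_x}v_j(y))^2 = |S_x|-|S_x|^2/n\leq |S_x|$, produces the master inequality
\[
\mathcal{N}_x(k) \leq n^2\,\frac{(\max_{|t|\leq \lambda}|P(t)|)^2}{P(d)^2}.
\]

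The next step is to minimize the right-hand ratio over polynomials $P$ of degree at most $k$. The natural extremal choice is the rescaled Chebyshev polynomial of the first kind, $P(t)=T_k(t/\lambda)$, for which $\max_{|t|\leq\lambda}|P(t)|=1$ while $P(d)=T_k(d/\lambda)=\cosh(k\log b)\geq \frac{1}{2} b^k$. The identity $\log b = \cosh^{-1}(d/\lambda)$ follows directly from the definition $b = d/\lambda + \sqrt{(d/\lambda)^2-1}$, and the final lower bound comes from $\cosh(x) = (e^x+e^{-x})/2 \geq e^x/2$. Substituting into the master inequality yields $\mathcal{N}_x(k) \leq 4n^2/b^{2k}$.

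Finally, I would choose $k$ to be the largest integer such that every $y$ with $d(x,y) > \frac{1}{2}\log_b n+\xi$ satisfies $d(x,y) > k$. With this choice $b^{2k}$ is essentially $n\,b^{2\xi}$ (up to the error coming from rounding), so the master inequality specializes to $\mathcal{N}_x(\frac{1}{2}\log_b n+\xi)/n \leq 4/b^{2\xi}$, uniformly in $x\in X$. The main technical point is the Chebyshev lower bound $T_k(d/\lambda)\geq b^k/2$ together with the integer rounding of $k$: both are elementary, but they need to be handled carefully so that the constant $4$ and the exponent $b^{2\xi}$ are preserved without parasitic factors.
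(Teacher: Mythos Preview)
Your proposal is correct and follows essentially the same route as the paper: both derive the master inequality $\mathcal{N}_x(k)\le n^2\max_{|t|\le\lambda}|P(t)|^2/P(d)^2$ and then plug in $P(t)=T_k(t/\lambda)$ together with the bound $T_k(d/\lambda)=\cosh(k\log b)\ge b^k/2$. The only cosmetic difference is that the paper obtains the master inequality by bounding the variance $W(P,x)=\sum_y\big(P(A)(x,y)-P(d)/n\big)^2$ from below by its restriction to $\{y:d(x,y)>k\}$, whereas you sum the linear identities $P(A)_{x,y}=0$ over that set and then apply Cauchy--Schwarz; both yield the identical inequality, and your remark about the integer rounding of $k$ is a point the paper simply glosses over.
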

We note that if we choose $\xi$ (bounded) so that $4 b^{-2 \xi}<1/2$, then given $x,y \in X$ we can find a common $z$ with $d(x,z) <\frac{1}{2}\log_b n + \xi  $ and $d(y,z) <\frac{1}{2}\log_b n + \xi  $. Therefore, $d(x,y) <\log_b n +2 \xi  $. This shows that the diameter is at most $\log_b n +2 \xi  $. This matches the bounds for the diameter that were derived in \cite{LPS} for Ramanujan graphs and in \cite{CFM} for $(n, d, \lambda)$ graphs. As in these papers, a crucial element in the analysis are the Chebychev polynomials of the first kind.
 
Let $p=d-1$. For the case where $X$ is Ramanujan, we have that $\lambda=2 \sqrt{p}$ and $b=\sqrt{p}$. Theorem \ref{expa} gives the following.
\begin{corollary}\label{Ram}
Let $X$ be a Ramanujan graph on $n$ vertices, then for $\xi>0$ we have that
\[ \max_{x\in X} \bigg \{  \frac{1}{n}\mathcal{N}_x(\log_p n + \xi ) \bigg \}\leq \frac{4}{p^{ \xi}} .\]
\end{corollary}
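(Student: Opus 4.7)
The plan is to derive Corollary \ref{Ram} directly from Theorem \ref{expa} by specializing the constant $b$ to the Ramanujan case. Since $X$ is Ramanujan we have $\lambda = 2\sqrt{p}$, so the quantity $\frac{d}{\lambda}$ appearing in the definition of $b$ becomes $\frac{p+1}{2\sqrt{p}}$. The first step is to compute $b$ explicitly under this substitution.

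I would carry out the algebra as follows. Observe that
\[
\left(\frac{d}{\lambda}\right)^{2} - 1 \;=\; \frac{(p+1)^{2}}{4p} - 1 \;=\; \frac{(p-1)^{2}}{4p},
\]
so that $\sqrt{(d/\lambda)^{2}-1} = \frac{p-1}{2\sqrt{p}}$. Therefore
\[
b \;=\; \frac{p+1}{2\sqrt{p}} + \frac{p-1}{2\sqrt{p}} \;=\; \frac{2p}{2\sqrt{p}} \;=\; \sqrt{p}.
\]

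With $b = \sqrt{p}$ in hand, the change-of-base identity $\log_{b} n = \frac{\log n}{\log \sqrt{p}} = 2\log_{p} n$ gives $\tfrac{1}{2}\log_{b} n = \log_{p} n$, while $b^{2\xi} = (\sqrt{p})^{2\xi} = p^{\xi}$. Substituting these two equalities into the conclusion of Theorem \ref{expa} transforms the bound $\tfrac{4}{b^{2\xi}}$ into $\tfrac{4}{p^{\xi}}$ and the threshold $\tfrac{1}{2}\log_{b} n + \xi$ into $\log_{p} n + \xi$, which is exactly the statement of the corollary.

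There is essentially no obstacle here: the corollary is a routine specialization of Theorem \ref{expa} once one has verified the (elementary) identity $b = \sqrt{p}$ for Ramanujan graphs. The only mild subtlety is keeping track of the factor of $\tfrac{1}{2}$ in $\tfrac{1}{2}\log_{b} n$, which is exactly absorbed by the passage from $b = \sqrt{p}$ to base $p$.
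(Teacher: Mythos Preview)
Your proof is correct and matches the paper's own approach exactly: the paper simply notes that for Ramanujan graphs $\lambda=2\sqrt{p}$ and $b=\sqrt{p}$, then reads off the corollary from Theorem \ref{expa}. Your algebra verifying $b=\sqrt{p}$ and the subsequent substitution are precisely what is needed.
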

\begin{remark}
Corollary \ref{Ram} gives a bounded window strengthening \eqref{diameter} and if it is not optimal, it is very close to being so. In particular, it allows one to replace the $3 \log_p \log n$ term in \eqref{diameter} by any function $f(n)$ which goes to infinity with $n$. 
\end{remark}

In the context of $d$-regular graphs , the almost diameter bound of Corollary \ref{Ram} is essentially the smallest it could be among all such graphs. On the other hand, the bound $2 \log_p n +4 $ for the diameter of a Ramanujan graph is probably not optimal. The random $d$-regular graph has diameter $(1+o(1)) \log_p n$ (see \cite{BF}), however the \cite{LPS}  Ramanujan graphs can have diameter at least $\frac{4}{3} \log_p n$, as was shown in \cite{Sardari}. We expect that this $\frac{4}{3} \log_p n$ is an upper bound for the diameter of a Ramanujan graph.

As is standard in cutoff analysis, our proofs involve the $\ell^2$ distance
$\Vert  P^t_x -U\Vert_2^2= \sum_{y \in X} \bigg \vert P^t(x,y) - \frac{1}{n} \bigg \vert^2  $
and its average over $x$
\[d_{2}(t):= \frac{1}{n} \sum_x \Vert  P^t_x -U\Vert_2^2.\]
Note that if $X$ is homogeneous, then $d_2(t)=\Vert  P^t_x -U\Vert_2^2$ for all $x$, as are all of the quantities defined in terms of the starting point $x$.

For the case of reversible Markov chains, such as the SRW on $X$, one can express $d_2(t)$ in terms of the eigenvalues and eigenfunctions of the transition matrix (see chapter $12$ of \cite{LPb}). Studying the spectrum of the transition has been a powerful tool for proving cutoff for many well known Markov chains, such as \cite{DS, Hild, BN}. We make judicious use of Chebychev polynomials and the eigenvalues and eigenfunctions of the adjacency matrix of $X$ to prove our results, and avoid using the NBRW on the edges of the graph.

Our analysis leads to the following basic conjecture.
\begin{conjecture}\label{conj}
If $X$ is a sequence of Ramanujan graphs and $t < 2 \log_p n$, then
\begin{equation}\label{13}
 d_{2}(t) \sim \frac{1}{N(t)}
\end{equation}
as $n \rightarrow \infty$. 
\end{conjecture}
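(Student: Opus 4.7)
The plan is to work from the spectral expansion
\[
d_2(t) = \frac{1}{nN(t)^2}\sum_{j \neq 0} q_t(\lambda_j)^2,
\]
where $K_t = q_t(A)$ for the polynomial $q_t$ defined by $q_0=1$, $q_1(x)=x$, $q_2(x)=x^2-(p+1)$ and $q_{s+1}(x)=xq_s(x)-pq_{s-1}(x)$ for $s\geq 2$. On Ramanujan eigenvalues $\lambda_j = 2\sqrt{p}\cos\theta_j$ with $\theta_j\in(0,\pi)$, the recursion yields the explicit form
\[
q_t(\lambda_j) = p^{t/2}\left(\frac{\sin((t+1)\theta_j)}{\sin\theta_j} - \frac{1}{p}\,\frac{\sin((t-1)\theta_j)}{\sin\theta_j}\right).
\]
One direction of the conjecture is essentially free: since $K_t(x,y)\in\Z_{\geq 0}$ we have $K_t(x,y)^2\geq K_t(x,y)$, so $\sum_j q_t(\lambda_j)^2 = \operatorname{tr}(K_t^2) \geq nN(t)$ and hence $d_2(t) \geq \frac{1}{N(t)} - \frac{1}{n}$. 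The content of the conjecture is that this is asymptotically tight, including in the range $t>\log_p n$ where the $-\frac{1}{n}$ term is comparable to the main term.

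The heart of the argument is the orthogonality of $\{q_s\}$ with respect to the Kesten--McKay measure $\mu_{KM}$ of the $(p+1)$-regular tree $\mathcal{T}$,
\[
\int q_r\, q_s\, d\mu_{KM} = N(s)\,\delta_{r,s},
\]
which reflects the fact that $q_s(A)\delta_o$ is precisely the indicator of the $s$-sphere around the root $o$ of $\mathcal{T}$. In particular $\int q_t^2\, d\mu_{KM} = N(t)$. Expanding $q_t^2 = \sum_{k=0}^t c_{2k}(t)\, q_{2k}$ (odd-index coefficients vanish by parity) and using the product formula $u_s u_t = \sum_{k=0}^{\min(s,t)} p^k\, u_{s+t-2k}$ for the Chebyshev-like polynomials $u_s(\lambda) := p^{s/2}\sin((s+1)\theta)/\sin\theta$ (so that $q_s = u_s - u_{s-2}$ for $s\geq 2$), one obtains $c_0(t)=N(t)$, $c_{2t}(t)=1$ and $c_{2k}(t)=O(p^{t-k})$ for $0<k<t$. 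Setting $E_s := \operatorname{tr}(K_s) - N(s) = \sum_{j\neq 0} q_s(\lambda_j)$, the conjecture reduces to
\[
\sum_{k=1}^{t} c_{2k}(t)\, E_{2k} = o(nN(t)) \quad\text{as } n\to\infty,
\]
uniformly in $t<2\log_p n$.

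The quantities $\operatorname{tr}(K_s)$ are spectral traces counting closed non-backtracking walks of length $s$ in $X$. For $s$ strictly below the girth they vanish, so $E_s = -N(s)$ there. Beyond the girth, combinatorial control of short cycles in Ramanujan graphs (of the type developed in \cite{LPS} and \cite{Sardari}), combined with the Ramanujan pointwise bound $|q_s(\lambda_j)|\leq 2p^{s/2}/|\sin\theta_j|$ and the estimate on $c_{2k}(t)$, gives the order-correct but non-sharp estimate $d_2(t) = O(t/N(t))$.

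The main obstacle is removing the extraneous factor $t$ to obtain the asymptotic $\sim 1/N(t)$, and the difficulty is most acute in the range $\log_p n < t < 2\log_p n$, where $N(t) > n$ and the trivial bounds are weakest. Closing the gap seems to require either (i) a cancellation argument across the sum in $k$, exploiting the oscillation of $\sin((s\pm 1)\theta_j)$ as $s$ varies, or (ii) a sharpening of the spectral input that bounds the density of eigenvalues $\lambda_j$ approaching the Ramanujan edge $\pm 2\sqrt{p}$, since there $\sin\theta_j\to 0$ and $|q_t(\lambda_j)|$ attains its maximum of size $\sim (t+1)p^{t/2}$. For the arithmetic Ramanujan graphs of \cite{LPS} I would expect Sato--Tate-type equidistribution of the angles $\theta_j$ together with the large girth to suffice; the case of general Ramanujan graphs looks genuinely harder, which is presumably why \eqref{13} is left at the level of a conjecture.
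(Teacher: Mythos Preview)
The statement is a \emph{conjecture} in the paper and is not proved there. Section~6 reformulates it as the assertion that the empirical eigenvalue measure $\mu_X=\tfrac{1}{n}\sum_{j\ne 0}\delta_{\theta_j}$ satisfies $\mu_X(R_t^2)\to\nu_p(R_t^2)=\tfrac{p+1}{p}$ uniformly for $t<2\log_p n$, where $\nu_p$ is the Kesten measure and $R_t=\tfrac{p-1}{p}U_t+\tfrac{2}{p}T_t$; this is the weak convergence of \cite{AGV} extended to test polynomials of degree growing with $n$. The paper verifies the asymptotic only for large-girth $X$ in the restricted range $t<g/5$ and defers the arithmetic case to forthcoming work, but gives no argument for general Ramanujan graphs. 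Your write-up is likewise a discussion of obstacles rather than a proof, and you say so explicitly at the end, so in that respect you and the paper agree: there is no proof to compare.

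Two remarks on the content of your discussion. First, the claim that ``one direction is essentially free'' via $K_t(x,y)^2\ge K_t(x,y)$ overstates matters: the resulting bound $d_2(t)\ge\tfrac{1}{N(t)}-\tfrac{1}{n}$ is negative once $N(t)>n$, i.e.\ throughout the range $\log_p n<t<2\log_p n$ where the conjecture is deepest, so even the lower half of the asymptotic requires the Ramanujan hypothesis there. Second, your reduction to controlling $\sum_{k} c_{2k}(t)\,E_{2k}$ with $E_s=\operatorname{tr}(K_s)-N(s)$ is equivalent to the paper's reformulation (both compare $\tfrac{1}{n}\sum_{j\ne 0} q_t(\lambda_j)^2$ with $\int q_t^2\,d\nu_p$), but the paper's phrasing in terms of $\mu_X$ versus $\nu_p$ makes the required input more transparent: one needs quantitative equidistribution of the $\theta_j$ against degree-$2t$ polynomials, which is exactly what Sato--Tate supplies in the arithmetic setting and what is unavailable for a general Ramanujan sequence.
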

This is consistent with the model that in this window the $N(t)$ end points of walks of length $t$
 are placing themselves at random among the $n$ vertices.

Our proofs involve approximations to \eqref{13}. The source of the gain being that the Kesten measure on $[-2 \sqrt{p}, 2 \sqrt{p}]$ vanishes to second order at $-2 \sqrt{p}$ and $2 \sqrt{p}$ (see \eqref{new}). In \cite{AGV} it is proven that the probability measure supported on $[-2 \sqrt{p}, 2 \sqrt{p}]$ corresponding to the eigenvalues of a Ramanujan graph, converges to the Kesten measure as $n \rightarrow \infty$. Conjecture \ref{conj} requires that this convergence holds with polynomials of degree as large as $\log_p n$. In a forthcoming paper \cite{SZ} this convergence and in particular Conjecture \ref{conj} is established for various arithmetic Ramanujan graphs. Our Conjecture \ref{conj} implies that the NBRW on these Ramanujan graphs exhibit cutoff with an explicit and tight bounded window, namely
\[ t_x(\varepsilon) \leq \log_p n + 2 \log_p \varepsilon^{-1},\]
for every starting point $x$.

\section{Preliminaries}
Let $X$ be a connected, $d$ regular graph on $n$ vertices, where $d$ is fixed. Let $A$ denote the adjacency matrix of $X$. $A$ is a symmetric matrix with eigenvalues
\[-d \leq \lambda_{n-1} \leq \ldots \leq \lambda_1<\lambda_0=d .\] 
Denote the corresponding orthonormal basis of eigenfunctions as $f_{n-1}, \ldots ,f_0 ,$ with $f_0(x)=\frac{1}{\sqrt{n}}$ for every $x\in X$. The fact that the $\{f_j\}$ are orthonormal means that 
\begin{equation} \label{orthog}
 \sum_{x \in X} f_i(x) f_{j}(x)=\delta_{i,j}.
\end{equation}
The fact that $\{f_j\}$ is an orthonormal basis gives that 
\begin{equation*}
\delta_x(y)= \sum_{j=0}^{n-1} \langle f_j, \delta_x \rangle f_j(y),
\end{equation*}
which translates to
\begin{equation}\label{sum}
\delta_x(y)= \sum_{j=0}^{n-1}f_j(x) f_j(y).
\end{equation}

When considering the $t$-th power of $A$, we have that the $(x,y)$ entry $A^t(x,y)$ is equal to the number of walks of length $t$ starting at $x$ and ending at $y$.
Let $P$ be a polynomial of the form 
\[P(x)= a_0 + a_1 x+ \ldots + a_{\ell}x^{\ell}.\]
We have that the matrix $P(A)$ can be expressed as
\begin{equation*}
P(A)(x,y)= \sum_{j=0}^{n-1} P(\lambda_j)f_j(x) f_j(y).
\end{equation*}
The key quantity that we estimate is the variance $W$ with respect to $P,$ defined as 
\begin{equation}\label{ee}
W(P,x):= \sum_{y } \left( P(A)(x,y) - \frac{P(\lambda_0)}{n} \right)^2,
\end{equation}
which by \eqref{orthog} is equal to the spectral sum 
\begin{equation}\label{var}
 \sum_{j \neq 0} \vert P(\lambda_j) \vert^2 f_j^2(x).
\end{equation}
\section{The almost diameter}
To estimate the almost diameter of $X$, we use the following key lemma.
\begin{lemma}\label{poly}
Let $\ell(P)$ be the degree of $P$, then for any $x$
\begin{equation*}
\left( \frac{P(\lambda_0)}{n}\right)^2 \mathcal{N}_x(\ell(P)) \leq  \max_{\lambda\neq \lambda_0 } \{  \vert P(\lambda ) \vert^2 \},
\end{equation*}
where $\mathcal{N}_x(\ell(P))$ is the number of vertices $y\in X$ such that $d(x,y)> \ell(P)$.
\end{lemma}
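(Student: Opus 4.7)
The plan is to use the two expressions for $W(P,x)$ given by \eqref{ee} and \eqref{var} and sandwich them with the information that $P$ has degree $\ell(P)$.

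First I would extract the key support property: since $A^k(x,y)$ counts walks of length $k$ from $x$ to $y$, we have $A^k(x,y)=0$ whenever $k<d(x,y)$. Consequently, if $P(x)=a_0+a_1x+\cdots+a_{\ell}x^{\ell}$ with $\ell=\ell(P)$, then for any pair $(x,y)$ with $d(x,y)>\ell$ all the terms $a_k A^k(x,y)$ vanish, so
\[
P(A)(x,y)=0 \quad\text{whenever}\quad d(x,y)>\ell(P).
\]
Plugging this into the definition \eqref{ee} of $W(P,x)$, each such vertex contributes exactly $(P(\lambda_0)/n)^2$, and dropping the remaining nonnegative terms gives the lower bound
\[
W(P,x)\;\geq\;\mathcal{N}_x(\ell(P))\cdot\left(\frac{P(\lambda_0)}{n}\right)^2.
\]

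Next I would obtain a matching upper bound from the spectral representation \eqref{var}. Pull out the maximum over nonzero eigenvalues:
\[
W(P,x)=\sum_{j\neq 0}\vert P(\lambda_j)\vert^2 f_j^2(x)\;\leq\;\max_{\lambda\neq\lambda_0}\{\vert P(\lambda)\vert^2\}\cdot\sum_{j\neq 0} f_j^2(x).
\]
The remaining sum is easy to control via \eqref{sum} evaluated at $y=x$, which yields $\sum_{j=0}^{n-1}f_j^2(x)=\delta_x(x)=1$, and in particular $\sum_{j\neq 0}f_j^2(x)\leq 1$. Combining the upper and lower bounds for $W(P,x)$ gives exactly the inequality claimed in the lemma.

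I do not expect a serious obstacle here: the lemma is really a two-sided estimate on $W(P,x)$, and both sides follow from facts already laid out in Section 2. The only point worth checking carefully is the support statement $P(A)(x,y)=0$ for $d(x,y)>\ell(P)$, which is what forces $P$ to pick up a "hole" of size $\mathcal{N}_x(\ell(P))$ around the uniform value $P(\lambda_0)/n$, and this is what drives the entire inequality.
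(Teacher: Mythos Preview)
Your proposal is correct and follows essentially the same approach as the paper: both arguments sandwich $W(P,x)$ between the lower bound $\mathcal{N}_x(\ell(P))\,(P(\lambda_0)/n)^2$ coming from the vanishing of $P(A)(x,y)$ for $d(x,y)>\ell(P)$, and the upper bound $\max_{\lambda\neq\lambda_0}\vert P(\lambda)\vert^2$ coming from the spectral sum together with $\sum_{j\neq 0}f_j^2(x)\leq 1$. Your write-up is in fact slightly more explicit about why the support property holds.
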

\begin{proof}
First of all, we note that since $A^t(x,y)$ is equal to the number of walks of length $t$ starting at $x$ and ending at $y$, we have that 
\begin{equation}
\mbox{for every $x, y \in X$, if }d(x,y) > \ell(P) \mbox{ then } P(A)(x,y)=0. 
\end{equation}
Combining this with \eqref{ee} and \eqref{var} we have that
\begin{equation}\label{ineq}
\sum_{y : d(x,y)> \ell(P)} \left( \frac{P(\lambda_0)}{n}\right)^2 \leq W(P,x) \leq \max_{\lambda \neq \lambda_0 } \{  \vert P(\lambda ) \vert^2 \} \sum_{j \neq 0} \vert f_j(x) \vert^2 .
\end{equation}
Equation \eqref{sum} gives that $\sum_{j \neq 0} \vert f_j(x) \vert^2 \leq 1 $, which finishes the proof.
\end{proof}

\subsection{Chebychev polynomials of the first kind}
Let $T_{\ell}$ be the Chebychev polynomials of the first kind of degree $\ell$, that is $T_{\ell}(x)= \cos (\ell \arccos x)$ and therefore $T_{\ell}(x ) \in [-1,1]$ for every $x\in [-1,1]$.

\begin{lemma}\label{Che}
For $\lambda \leq \lambda_0$, the Chebychev polynomials of the first kind satisfy 
$$T_{\ell} \left(\frac{\lambda_0}{\lambda} \right) \geq \frac{b^{\ell}}{2},$$
where $b=\left( \frac{\lambda_0}{ \lambda} + \sqrt{ \left(\frac{\lambda_0}{ \lambda} \right)^2 -1 } \right)$.
\end{lemma}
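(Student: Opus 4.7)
The plan is to exploit the classical closed form for Chebyshev polynomials outside the interval $[-1,1]$. Since the hypothesis $\lambda \leq \lambda_0$ (with $\lambda > 0$, as $\lambda$ bounds the non-trivial eigenvalues of an expander) gives $\lambda_0/\lambda \geq 1$, we can evaluate $T_\ell$ at a point where the trigonometric definition $T_\ell(\cos\theta) = \cos(\ell\theta)$ is replaced by its hyperbolic analogue.

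First I would observe that for any $x \geq 1$ we may write $x = \cosh u$ for a unique $u \geq 0$. The identity $T_\ell(\cos\theta) = \cos(\ell\theta)$ is a polynomial identity, so by analytic continuation (or equivalently by directly verifying that $\cosh(\ell u)$ satisfies the Chebyshev recurrence) we have
\begin{equation*}
T_\ell(\cosh u) = \cosh(\ell u) = \frac{e^{\ell u} + e^{-\ell u}}{2}.
\end{equation*}
Next I would identify $e^u$ in terms of $x$: from $\cosh u = x$ and $\sinh u = \sqrt{x^2-1}$ (valid since $u \geq 0$), we get $e^u = x + \sqrt{x^2-1}$. Applying this with $x = \lambda_0/\lambda$ yields $e^u = b$, so
\begin{equation*}
T_\ell\!\left(\frac{\lambda_0}{\lambda}\right) = \frac{b^\ell + b^{-\ell}}{2}.
\end{equation*}

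Finally, since $b \geq 1$ (as $\lambda_0/\lambda \geq 1$), both $b^\ell$ and $b^{-\ell}$ are positive, so dropping the second term gives the desired lower bound $T_\ell(\lambda_0/\lambda) \geq b^\ell/2$. There is no real obstacle here; the only thing to be careful about is justifying the extension of the Chebyshev formula from the interval $[-1,1]$ to $[1,\infty)$, which one does either by analytic continuation or by checking the recurrence $T_{\ell+1}(x) = 2x T_\ell(x) - T_{\ell-1}(x)$ against $\cosh((\ell+1)u) = 2\cosh u \cosh(\ell u) - \cosh((\ell-1)u)$.
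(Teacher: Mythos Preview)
Your proof is correct and is essentially the same as the paper's: the paper writes $\lambda_0/\lambda=\cos\theta_0$ with the purely imaginary $\theta_0=i\log b$ and computes $T_\ell(\cos\theta_0)=\cos(\ell\theta_0)=\tfrac{1}{2}(b^\ell+b^{-\ell})$, which is exactly your hyperbolic-cosine computation in different notation (via $\cos(iu)=\cosh u$). Both then drop the positive term $b^{-\ell}/2$ to conclude.
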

\begin{proof}
Using the fact that $\cos \theta= \frac{e^{i \theta} +e^{-i \theta}}{2}$, we can write $\lambda_0=\lambda \cos \theta_0$, where $\theta_0= i \log \left( \frac{\lambda_0}{ \lambda} + \sqrt{ \left(\frac{\lambda_0}{ \lambda} \right)^2 -1 } \right)$. This gives that 
$$T_{\ell} \left(\frac{\lambda_0}{\lambda} \right) =T_{\ell} \left( \cos \theta_0 \right)= \frac{1}{2} (b^{\ell} + b^{-\ell}) \geq  \frac{b^{\ell}}{2}.$$
\end{proof}

\subsection{The almost diameter for expanders}
In this section, we present the proof of Theorem \ref{expa} concerning the almost diameter of $(n, d, \lambda)$ graphs. 
\begin{proof}[Proof of Theorem \ref{expa}]
Let $T_{\ell}$ be the Chebychev polynomial of the first kind of degree $\ell$. We apply Lemma \ref{poly} to the polynomial
\[P(x)= T_{\ell}\left(\frac{x}{\lambda} \right),\]
where $\ell$ will be determined later.
The right hand side of the equation in Lemma \ref{poly} satisfies that
\begin{equation}\label{one}
\max_{\lambda_i \neq \lambda_0 } \{  \vert P(\lambda_i ) \vert^2  \}\leq 1,
\end{equation}
since all $\lambda_i \neq \lambda_0$ satisfy that $\vert \lambda_i \vert \leq \lambda$ and $T_{\ell}(x)= \cos (\ell \arccos x)$
for $x \in [-1,1]$. At the same time, Lemma \ref{Che} gives that
\begin{equation}\label{two}
\left( P(\lambda_0)\right)^2= \left( T_{\ell}\left(\frac{\lambda_0}{\lambda} \right)\right)^2\geq \frac{b^{2\ell}}{4}.
\end{equation}
Lemma \ref{poly} and equations \eqref{one} and \eqref{two} give that 
\begin{equation}\label{final}
\frac{1}{n}\mathcal{N}_x(\ell) \leq  \frac{4n}{b^{2\ell}}.
\end{equation}
Let $\xi>0$ be as in Theorem \ref{Ram} and set $\ell=\frac{1}{2} \log_{b}n + \xi$. Then equation \ref{final} gives the desired result.
\end{proof}

\section{The mixing time for the non-backtracking random walk}
In this section, we present our results concerning the mixing time of the NBRW on $X$. 

\subsection{Chebychev polynomials of the second kind}
The NBRW can be expressed in terms of the Chebychev polynomials of the second kind. In this section, we explain this connection and we prove some useful properties for the Chebychev polynomials of the second kind.

Let $U_{\ell}$ be the Chebychev polynomials of the second kind of degree $\ell$, defined as
\[U_{\ell}(\cos \theta) =\frac{\sin \left((\ell+1) \theta \right)}{\sin \theta}. \]
The Chebychev polynomials of the second kind satisfy the following recurrence relation:
\[\begin{cases}
U_0(x)&=1\cr
U_1(x)&=2x\cr
U_{\ell+1}(x) &= 2x U_{\ell}(x) - U_{\ell-1}(x).
\end{cases}\]
Set 
\begin{equation} \label{defini}
P_{\ell}(x)= p^{\ell/2} U_{\ell}\left( \frac{x}{2 \sqrt{p}} \right). 
\end{equation}
\begin{lemma}\label{stype}
Let $A$ be the adjacency matrix of a regular graph. We have that
\[P_{\ell}(A)(x,y)= \sum_{0 \leq j \leq \ell/2} K_{\ell -2j} (x,y),\]
where $K_t(x,y)$ is the number of non-backtracking random walks of length $t$ from $x$ to $y$.
\end{lemma}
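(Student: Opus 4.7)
\bigskip
\noindent\textbf{Proof proposal.}
The plan is to proceed by induction on $\ell$, using the three-term recurrence
$$P_{\ell+1}(x) = x\, P_{\ell}(x) - p\, P_{\ell-1}(x), \qquad P_0(x)=1,\ P_1(x)=x,$$
which follows directly from the recurrence for $U_\ell$ and the normalization \eqref{defini}. Write $S_\ell:=\sum_{0\le j\le \ell/2} K_{\ell-2j}$ for the proposed right-hand side. The induction step will require a "transfer matrix" identity describing how multiplication by $A$ acts on the matrices $K_t$ of non-backtracking walk counts.

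First I would establish, by a direct combinatorial argument, the recursion
$$A\,K_t \;=\; K_{t+1} + p\,K_{t-1} \qquad (t\ge 2),$$
together with the boundary cases $A\,K_0 = K_1$ and $A\,K_1 = K_2 + d\,K_0$. The idea is that $(A K_t)(x,y)=\sum_{z\sim y} K_t(x,z)$ counts walks $(x=x_0,x_1,\ldots,x_t=z,y)$ whose first $t$ steps form a NBRW. Such a walk is itself a NBRW of length $t+1$ unless it backtracks at the last step, i.e.\ $y=x_{t-1}$; for $t\ge 2$ each NBRW of length $t-1$ from $x$ to $y$ can be extended in exactly $p=d-1$ ways to produce a backtracking contribution, giving the $p\,K_{t-1}$ term. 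The anomalous coefficient $d=p+1$ at $t=1$ arises because at length one there is no "previous vertex" constraint.

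With these identities in hand I would check the base cases $\ell=0,1$ trivially, and also verify $\ell=2$ by computing $P_2(A)=A^2-pI=K_2+I=S_2$, using $A^2=K_2+(p+1)I$. For the inductive step, assuming $P_\ell(A)=S_\ell$ and $P_{\ell-1}(A)=S_{\ell-1}$, one computes
$$P_{\ell+1}(A)=A\,S_\ell - p\,S_{\ell-1}$$
by substituting $A K_{\ell-2j}$ using the boundary-adjusted recurrence on each summand. Re-indexing then groups the $K_{m+1}$ terms into $S_{\ell+1}$ and the $p K_{m-1}$ terms into $p S_{\ell-1}$, which cancels against the subtracted $p S_{\ell-1}$.

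The main obstacle is bookkeeping at the lower endpoint: the parities $\ell$ even versus $\ell$ odd give different lowest terms ($K_0$ or $K_1$), and the discrepancy between $A K_1=K_2+(p+1)K_0$ and the "generic" $A K_1=K_2+p K_0$ produces an extra $K_0$ on the odd side. I expect this extra term to be exactly what is needed so that $S_{\ell+1}$ includes a $K_0$ summand on the even $(\ell+1)$ side; verifying this match in the odd case is the only nontrivial bit of algebra, and it should close the induction cleanly.
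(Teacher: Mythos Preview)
Your induction argument is correct, and the parity bookkeeping closes exactly as you anticipate: in both the even and the odd case one finds $A\,S_\ell = S_{\ell+1} + p\,S_{\ell-1}$, whence $P_{\ell+1}(A) = A\,S_\ell - p\,S_{\ell-1} = S_{\ell+1}$. (One small quibble: the expression $\sum_{z\sim y} K_t(x,z)$ that you write down is $(K_t A)(x,y)$, not $(AK_t)(x,y)$; but the combinatorial argument works identically from either end, and in any event the matrices commute.) The paper, however, does not argue by induction: it simply notes that both sides share the generating function $\sum_{\ell\ge 0} P_\ell(A)\,t^\ell = (I - At + pt^2)^{-1}$, referring to Lemma~1.4.3 of \cite{DSV} for the details. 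Under the hood the two arguments are equivalent---your three-term recurrences for $P_\ell$ and for $K_t$ are precisely what encode the denominators $1-At+pt^2$ and (via the Ihara-type identity) $(1-t^2)/(1-At+pt^2)$, and the convolution $S_\ell=\sum_j K_{\ell-2j}$ corresponds to multiplying the latter by $1/(1-t^2)$. Your route is more elementary and fully self-contained; the paper's generating-function route is a one-liner but leans on an outside reference.
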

\begin{proof}
The two sides have the following generating function
\[\sum_{\ell=0}^{\infty} P_{\ell} t^{\ell} = \frac{1}{1-At +p t^2}, \]
and therefore they are equal. For more details, we refer to Lemma 1.4.3 of \cite{DSV}.
\end{proof}

We start with the following lemma. Set $ \lambda_j=2 \sqrt{p}\cos \theta_j$. Notice that $\theta_0= i \log \sqrt{p}$ and therefore 
\begin{equation}\label{l0}
P_{\ell}(\lambda_0)= \frac{p^{\ell+1} -1}{p-1}.
\end{equation}

\begin{lemma}\label{s2}
Let $g$ be the girth of $X$ and let $\ell \leq g/5$.   For $n$ large enough, we have that
\[\sum_{j=1}^{n-1} \left( U_{\ell}(\cos \theta_j) \right)^2 f_j^2(x) \leq 2,\]
for every $x \in X$.
\end{lemma}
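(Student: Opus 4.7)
The plan is to express the spectral sum via Parseval and reduce to counting non-backtracking walks. From the spectral decomposition $P_\ell(A)(x,y) = \sum_j P_\ell(\lambda_j) f_j(x) f_j(y)$ and the orthogonality relation \eqref{orthog}, one obtains
\[\sum_{j=0}^{n-1} (P_\ell(\lambda_j))^2 f_j(x)^2 = \sum_{y\in X} (P_\ell(A)(x,y))^2.\]
Since $P_\ell(\lambda_j)^2 = p^\ell (U_\ell(\cos\theta_j))^2$, it suffices to compute the right-hand side, drop the nonnegative $j=0$ contribution, and divide by $p^\ell$.

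To compute the right-hand side, I would use Lemma \ref{stype} to write $P_\ell(A)(x,y) = \sum_{0\le i\le \ell/2} K_{\ell-2i}(x,y)$ and expand the square. The hypothesis $\ell\le g/5$ (in particular $2\ell<g$) then controls both the diagonal and off-diagonal contributions. On the diagonal, any two distinct non-backtracking walks of length $t\le \ell$ from $x$ with a common endpoint would concatenate (one reversed) to a closed non-backtracking walk at $x$ of length $2t<g$, which is impossible since any closed NB walk contains a graph cycle of length at most itself; hence $K_t(x,y)\in\{0,1\}$ and $\sum_y K_t(x,y)^2 = \sum_y K_t(x,y) = N(t)$.

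For the off-diagonal cross terms $\sum_y K_s(x,y) K_t(x,y)$ with $s<t\le\ell$, the key claim is that no $y$ admits both a NB walk of length $s$ and a NB walk of length $t$ starting at $x$. Supposing otherwise and letting $\gamma_1,\gamma_2$ be the unique such walks, I would iteratively compare their last edges at the common endpoint: if the last edges coincide as graph edges, strip them from both walks and repeat at the new common endpoint; if they ever disagree, the concatenation $\gamma_2\circ\gamma_1^{-1}$ is a closed non-backtracking walk at $x$ of length at most $s+t\le 2\ell<g$, a contradiction. Otherwise the trimming exhausts $\gamma_1$ after $s$ steps, leaving $\gamma_2$ reduced to a nontrivial closed non-backtracking walk at $x$ of length $t-s<g$, again contradicting the girth.

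Combining, $\sum_y(P_\ell(A)(x,y))^2$ collapses to $\sum_{0\le i\le\ell/2} N(\ell-2i)$ (with the $K_0$ term supplying a $1$ when $\ell$ is even), a geometric sum that telescopes to $\frac{p^{\ell+1}-1}{p-1}=P_\ell(\lambda_0)$ in either parity. Discarding the $j=0$ term and dividing by $p^\ell$ yields
\[\sum_{j=1}^{n-1}(U_\ell(\cos\theta_j))^2 f_j(x)^2 \le p^{-\ell}P_\ell(\lambda_0) = \frac{p-p^{-\ell}}{p-1}\le \frac{p}{p-1}\le 2,\]
using $p\ge 2$. The main obstacle I anticipate is the off-diagonal vanishing: the trimming argument must carefully handle both the \emph{disagreement} case (giving a short closed NB walk from $x$ through $y$) and the \emph{total agreement} case (leaving a short closed NB walk at $x$), and both rely crucially on $2\ell<g$. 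The Parseval identity, the diagonal count, and the final algebraic bound are then essentially routine.
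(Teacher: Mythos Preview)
Your proof is correct and follows essentially the same route as the paper: both use the girth hypothesis to see that $P_\ell(A)(x,y)\in\{0,1\}$ (the paper phrases this as ``the first $\ell$ steps coincide with the tree'', your trimming argument makes the needed bound $2\ell<g$ explicit), and then evaluate the resulting $\ell^2$ sum. The only difference is bookkeeping: you compute the full second moment $\sum_y P_\ell(A)(x,y)^2=P_\ell(\lambda_0)$ via Parseval and discard the nonnegative $j=0$ term, whereas the paper works with the centered variance $W(P_\ell,x)$ and must then absorb the extra term $(P_\ell(\lambda_0))^2/n$ --- this is precisely why the paper needs ``$n$ large enough'' while your version gives $\frac{p}{p-1}\le 2$ unconditionally.
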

\begin{proof}
Since  $\ell < g$, the $\ell$ first steps of the NBRW on $X$ are the same as the $\ell $ first steps on a $d$ regular tree. Therefore,
 \[  \sum_{1 \leq j \leq \ell/2} K_{\ell -2j} (x,y) = \begin{cases} 
          1 & d(x,y)\leq \ell  \mbox{ and } d(x,y) \equiv \ell \mod 2,\\
         0 & \mbox{otherwise.}
       \end{cases}
    \]
Combined with \eqref{ee}, \eqref{l0} and Lemma \ref{stype}, this gives that
\begin{align*}
W(P_{\ell},x)&=   \sum_{\substack{ d(x,y)> \ell \mbox{ or} \\ d(x,y) \equiv \ell +1 \mod 2}} \left( \frac{p^{\ell+1} -1}{n(p-1)} \right)^2 + \sum_{\substack{ d(x,y)\leq \ell  \\ d(x,y) \equiv \ell \mod 2}} \left( 1- \frac{p^{\ell+1} -1}{n(p-1)} \right)^2  \cr
& \leq  \frac{1}{n} \left( \frac{p^{\ell+1} -1}{p-1} \right)^2 +  \sum_{\substack{ d(x,y)\leq \ell  \\ d(x,y) \equiv \ell \mod 2}}  1\cr
&\leq   \frac{1}{n}  \left( \frac{p^{\ell+1}-1}{p-1}\right)^2+ \left( \frac{p^{\ell+1}-1}{p-1}\right) .
\end{align*}
For $n $ large, we use the fact that $2 \ell+2 \leq \frac{2}{5}g +2 \leq \frac{4}{5} \log_p n+2$ to get that
\begin{align}
\label{c}W(P_{\ell},x)&\leq \frac{p^{\ell+1}}{p-1} .
\end{align}
Equations \eqref{var} and \eqref{defini} give that
\[\sum_{j=1}^{n-1} \left( U_{\ell}(\cos  \theta_j) \right)^2 f_j^2(x) \leq \frac{p}{p-1} \leq 2,\]
as desired.
 \end{proof}
\subsection{The non-backtracking random walk}
The first lemma gives $K_t$ as an explicit polynomial in $A$ (see also \cite{CA} and \cite{ABLS}). 
\begin{lemma}\label{polynomial}
Set $Q_t(x)= p^{t/2} \left( \frac{p-1}{p} U_t \left(\frac{x}{2 \sqrt{p}}\right) + \frac{2}{p} T_t \left(\frac{x}{2 \sqrt{p}} \right)  \right)$. We have that
\[Q_t(A)(x,y)=K_t(x,y),\]
for every $x,y \in X$.
\end{lemma}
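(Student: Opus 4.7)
The plan is to reduce the claim to Lemma \ref{stype} by showing that $Q_t = P_t - P_{t-2}$ (as polynomials), so that the telescoping of the sum in Lemma \ref{stype} isolates exactly $K_t$. Concretely, Lemma \ref{stype} gives
\[
P_t(A)(x,y) - P_{t-2}(A)(x,y) = \sum_{0 \le j \le t/2} K_{t-2j}(x,y) - \sum_{0 \le j \le (t-2)/2} K_{t-2-2j}(x,y) = K_t(x,y),
\]
so once $Q_t = P_t - P_{t-2}$ is established the lemma follows.

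To verify the polynomial identity, I would use the classical relation $T_\ell(y) = \tfrac{1}{2}\bigl(U_\ell(y) - U_{\ell-2}(y)\bigr)$ for $\ell \ge 2$, which is immediate from $U_\ell(\cos\theta) = \sin((\ell+1)\theta)/\sin\theta$ and $T_\ell(\cos\theta) = \cos(\ell\theta)$ together with the product-to-sum formula. Substituting $y = x/(2\sqrt{p})$ into the definition of $Q_t$ and collecting the $U_t$ term gives
\[
Q_t(x) = p^{t/2}\Bigl( U_t\bigl(x/(2\sqrt{p})\bigr) - \tfrac{1}{p} U_{t-2}\bigl(x/(2\sqrt{p})\bigr)\Bigr),
\]
and multiplying by the appropriate powers of $p$ identifies the two terms with $P_t(x)$ and $P_{t-2}(x)$ from \eqref{defini}, yielding $Q_t = P_t - P_{t-2}$ for $t \ge 2$.

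The cases $t = 0$ and $t = 1$ I would dispatch by direct evaluation: $Q_0 = (p+1)/p$ is immaterial (the statement is really for $t \ge 1$) and $Q_1(x) = x$, which gives $Q_1(A)(x,y) = A(x,y) = K_1(x,y)$ since one-step walks cannot backtrack. The only substantive step is the polynomial identity, and there is no real obstacle here — it is a short Chebyshev manipulation — so the lemma follows cleanly from Lemma \ref{stype} after the algebra.
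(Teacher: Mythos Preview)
Your proposal is correct and follows essentially the same approach as the paper: both use Lemma~\ref{stype} to obtain $K_t = P_t(A) - P_{t-2}(A)$ and then the Chebyshev identity $U_t = U_{t-2} + 2T_t$ (equivalently your $T_t = \tfrac{1}{2}(U_t - U_{t-2})$) to identify $P_t - P_{t-2}$ with $Q_t$. The only cosmetic difference is that you start from $Q_t$ and simplify to $P_t - P_{t-2}$, whereas the paper starts from $K_t = P_t(A) - P_{t-2}(A)$ and simplifies to $Q_t(A)$.
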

\begin{proof}
Using Lemma \ref{stype}, we can write that
\begin{align}
\label{k}K_t(x,y)= P_t(A)(x,y)-P_{t-2}(A)(x,y).
\end{align}
Using the following relationship between Chebychev polynomials of the two types
\[U_{t}=U_{t-2} + 2 T_t\]
and \eqref{defini}, we can rewrite \eqref{k} as
\begin{align}
\label{k2}K_t(x,y)= p^{t/2} \left( \frac{p-1}{p} U_t \left(\frac{A}{2 \sqrt{p}}\right) + \frac{2}{p} T_t \left(\frac{A}{2 \sqrt{p}} \right)  \right).
\end{align}
In other words, 
\begin{align}
Q_t(A)(x,y)=K_t(x,y).
\end{align}
\end{proof}
 We now use Lemma \ref{polynomial} to write the following expression for the variance.
\begin{align} 
W(Q_t(A),x)&=p^t \sum_{j \neq 0} \left(\frac{p-1}{p} \frac{\sin ((t+1) \theta_j)}{\sin  \theta_j} + \frac{2}{p} cos (t \theta_j) \right)^2f_j^2(x)\cr
&\label{last5} \leq  p^t  \left( t+1 \right)^2,
\end{align}
which is the bound given in Lubetzky and Peres \cite{LP}. As they note in Remark 3.7 of \cite{LP}, in order to get rid of the factor $(t+1)^2$ in \eqref{last5}, one needs some control on the distribution of the $\theta_j$.To do so, we assume a lower bound on the girth $g$ of $X$.

\begin{lemma} \label{tv}
Fix $\delta>0$ and assume that $X$ has girth $g \geq \delta \log_p n$ and is Ramanujan, then
\[W(Q_t(A),x) \leq 12 \left(  \frac{10}{\delta}+1 \right)^2p^t, \]
for $\log_p n \leq t \leq 2 \log_p n$.
\end{lemma}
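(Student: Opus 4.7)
\textbf{Proof proposal for Lemma \ref{tv}.}

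The naive bound \eqref{last5} loses because the Dirichlet kernel $U_t(\cos\theta_j) = \sin((t+1)\theta_j)/\sin\theta_j$ can be as large as $t+1$ for $\theta_j$ near $0$ or $\pi$. The plan is to replace $U_t$ by something that the girth hypothesis can control via Lemma \ref{s2}, and pay only a multiplicative factor depending on $t/g$.

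The core algebraic step is the trigonometric identity
\[
\sin((t+1)\theta) \;=\; \sin((\ell+1)\theta)\cos((t-\ell)\theta) + \cos((\ell+1)\theta)\sin((t-\ell)\theta),
\]
which, after dividing by $\sin\theta$ and taking absolute values (all $\theta_j$ are real since $X$ is Ramanujan), gives the ``triangle inequality''
\[
|U_t(\cos\theta)| \;\leq\; |U_\ell(\cos\theta)| + |U_{t-\ell-1}(\cos\theta)|.
\]
Set $\ell=\lfloor g/5 \rfloor$ and write $t = q(\ell+1)+r$ with $0\leq r\leq\ell$. Iterating the above inequality $q$ times yields
\[
|U_t(\cos\theta)| \;\leq\; q\,|U_\ell(\cos\theta)| + |U_r(\cos\theta)|.
\]
Squaring (via $(a+b)^2\leq 2a^2+2b^2$), summing against $f_j^2(x)$ over $j\neq 0$, and applying Lemma \ref{s2} to both $U_\ell$ and $U_r$ (both indices lie in $[0,g/5]$) produces
\[
\sum_{j\neq 0} \bigl(U_t(\cos\theta_j)\bigr)^2 f_j^2(x) \;\leq\; 4q^2 + 4.
\]
For $\log_p n \leq t \leq 2\log_p n$ and $\ell+1 \geq g/5 \geq (\delta/5)\log_p n$, this gives $q \leq 10/\delta$, hence the above sum is $\leq 4(10/\delta+1)^2 + 4$.

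The $T_t$ piece is handled trivially: since $|T_t(\cos\theta_j)|\leq 1$ and $\sum_{j\neq 0} f_j^2(x)\leq 1$ by \eqref{sum}, we get $\sum_{j\neq 0}(T_t(\cos\theta_j))^2 f_j^2(x) \leq 1$. Plugging both estimates into the expression
\[
W(Q_t(A),x) \;=\; p^t \sum_{j\neq 0}\!\left(\tfrac{p-1}{p}U_t(\cos\theta_j) + \tfrac{2}{p}T_t(\cos\theta_j)\right)^{\!2} f_j^2(x),
\]
expanding via $(a+b)^2\leq 2a^2+2b^2$, and collecting constants yields the stated bound $12\,(10/\delta+1)^2\, p^t$.

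The only nontrivial step is the derivation of the triangle inequality for $U_t$ together with its iteration; everything else is routine bookkeeping of constants. The one thing to watch is ensuring that each $U$-index produced during the iteration stays within $[0, g/5]$ so that Lemma \ref{s2} applies — this is built into the choice $\ell=\lfloor g/5\rfloor$ and $r\leq\ell$.
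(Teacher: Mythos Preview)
Your proof is correct and follows the same overall strategy as the paper: reduce the large Dirichlet kernel $U_t$ to a small one of index at most $g/5$, which Lemma \ref{s2} controls. The decomposition you use is, however, different from the paper's. The paper writes $t+1=mk+r$ with $k=\lfloor 10/\delta\rfloor+1$ (so $m\leq g/5$), then uses the \emph{multiplicative} factorisation
\[
\left|\frac{\sin(mk\theta)}{\sin\theta}\right|
=\left|\frac{\sin(mk\theta)}{\sin(m\theta)}\right|\cdot\left|\frac{\sin(m\theta)}{\sin\theta}\right|
\le k\,|U_{m-1}(\cos\theta)|,
\]
applying Lemma \ref{s2} once (to $U_{m-1}$) and bounding the remainder $|\sin(r\theta)/\sin\theta|$ trivially by $r<k$. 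You instead iterate the \emph{additive} identity $|U_s|\le |U_\ell|+|U_{s-\ell-1}|$ with $\ell=\lfloor g/5\rfloor$, obtaining $|U_t|\le q|U_\ell|+|U_r|$ and invoking Lemma \ref{s2} twice. Both routes produce the factor $(10/\delta)^2$ and are equally elementary; your version is perhaps slightly cleaner in that it treats the remainder $U_r$ on the same footing as $U_\ell$ rather than by a separate crude bound, while the paper's version avoids the iteration and needs only a single appeal to Lemma \ref{s2}.
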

\begin{proof}
Set $k= \lfloor \frac{10}{\delta}\rfloor +1$. For $t \in [\log_p n, 2 \log_p n]$, write $t+1$ as $m k + r$ with $0 \leq r <k$. Notice that then
$m \leq \frac{2}{k} \log_p n \leq g/5$ and so we can apply Lemma \ref{s2} with this $m$. 
According to Lemma \ref{polynomial} with $\lambda_j= 2 \sqrt{p} \cos \theta_j,$ we have that
\begin{align} 
W(Q_t(A),x)&=p^t \sum_{j \neq 0} \left(\frac{p-1}{p} \frac{\sin ((t+1) \theta_j)}{\sin  \theta_j} + \frac{2}{p} cos (t \theta_j) \right)^2f_j^2(x)\cr
& \label{cut} \leq 2 p^t \sum_{j \neq 0} \left( \left( \frac{\sin ((mk+r) \theta_j)}{\sin  \theta_j} \right)^2+ \frac{4}{p^2} cos^2 (t \theta_j) \right)f_j^2(x)
\end{align}
Using standard trigonometric identities we have that
\begin{align}
\eqref{cut}& \leq 2 p^t \sum_{j \neq 0}\left( \frac{\sin (mk \theta_j) \cos r \theta_j+ \cos (mk \theta_j) \sin (r \theta_j)}{\sin  \theta_j} \right)^2f_j^2(x) + 8p^{t-2}\cr
&\leq 4 p^t \sum_{j \neq 0} \left( \left( \frac{\sin (mk \theta_j) }{\sin  \theta_j} \right)^2 + \left(\frac{\sin (r \theta_j) }{\sin  \theta_j} \right)^2 \right)f_j^2(x) + 8p^{t-2}\cr
&\leq 4 p^t \sum_{j \neq 0} \left( \left( \frac{\sin (mk \theta_j) }{\sin  \theta_j} \right)^2 + r^2 \right)f_j^2(x) + 8p^{t-2}\cr
&\label{last1} \leq 4 p^t \sum_{j \neq 0} \left( \left( \frac{\sin (mk \theta_j)}{\sin  \theta_j} \right)^2  \right)f_j^2(x) + 4r^2p^t+ 8p^{t-2}
\end{align}
We use the fact that
\[\bigg \vert \frac{\sin (mk \theta_j)}{\sin \theta_j} \bigg \vert= \bigg \vert \frac{\sin (mk \theta_j)}{\sin(m \theta_j)} \frac{\sin(m \theta_j)}{\sin \theta_j} \bigg \vert \leq k    \bigg \vert  \frac{\sin(m \theta_j)}{\sin \theta_j} \bigg \vert \]
and Lemma \ref{s2} to get
\[4 p^t \sum_j \left( \frac{\sin (mk \theta_j)}{\sin  \theta_j} \right)^2 f_j^2(x) \leq 8k^2 p^t . \]
Equation \eqref{last1} gives that
\[W(Q_t(A),x) \leq 12 k^2 p^t \leq 12\left(\frac{10}{\delta} +1\right)^2 p^t,\]
since $0\leq r <k$. This completes the proof of Lemma \ref{tv}.
\end{proof}

\subsection{The bounded window}\label{bd}
In this section, we present the proof of Theorem \ref{Ram2}. Let $P $ be the transition matrix of the non-backtracking random walk on $X$. For $t \geq 0$, we have that $P^t(x,y)=\frac{1}{(p+1)p^{t-1}}K_t(x,y)$.  
Therefore, applying Cauchy-Schwartz we get that
\begin{align*}
4 d^2_x(t) & \leq  \sum_y \bigg \vert \frac{1}{(p+1)p^{t-1}}K_t(x,y)- \frac{1}{n}   \bigg \vert^2.
\end{align*}
Since $Q_t(d)= (p+1)p^t$, equation \eqref{ee} gives that
\begin{align*} 
W(Q_t(A),x)&= \sum_y \left( K_t(x,y) - \frac{(p+1) p^{t}}{n} \right)^2.
\end{align*}
Therefore,
\begin{equation}
4 d^2_x(t) \leq 
\label{ff}\frac{n}{(p+1)^2p^{2t-2}} W(Q_t(A),x).
\end{equation}
Using Lemma \ref{tv}, we get that for $\log_p n \leq t \leq 2 \log_p n,$
\begin{align}
 d_x(t) & \leq\frac{n^{1/2}}{2(p+1)p^{t-1}}( W(Q_t(A),x))^{1/2}\cr
& \leq 2 \left( 1+  \frac{10}{\delta} \right) \left(\frac{n}{p^t}\right)^{1/2}.
\end{align}
By taking $t= \log_p n + 2 \log_p \varepsilon^{-1} +2 \log_p \left( 2(1+\frac{10}{\delta}) \right),$ we get that
\[ d_x(t) \leq \varepsilon,\]
and this holds uniformly for $x \in X$.

\section{The density hypothesis}

Let $X$ be an $(n, d ,\lambda)$ graph satisfying the density property, as defined in \ref{S-X}. The goal of this section is to prove that the mixing time of the non-backtracking random walk on $X$ is at most $(1+ \eta) \log_p n$ for every $\eta>0$.

The following Lemma is key to proving Theorem \ref{SX}.
\begin{lemma}\label{eps}
Let $X$ be an $(n, d ,\lambda)$ expander sequence, that satisfies the density hypothesis. We set $I_n=\sum_{j =1}^{n-1} p^{- [\frac{1}{2}- \phi_j]2t} +\sum_{j =1}^{n-1} p^{- [\frac{1}{2}- \psi_j]2t} $. Then 
$$\lim_{n \rightarrow \infty}I_n=0,$$ 
if $t\geq(1+\eta) \log_p n $.
\end{lemma}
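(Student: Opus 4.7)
The plan is to separate $I_n$ into contributions from non-exceptional and exceptional eigenvalues, exploiting the expander hypothesis to keep the exceptional parameters strictly below $1/2$. Interpreting $\phi_j = \psi_j = 0$ whenever $|\lambda_j| \leq 2\sqrt{p}$, each such term contributes $p^{-t} \leq n^{-(1+\eta)}$, so these account for at most $2n\cdot p^{-t} \leq 2n^{-\eta}$ in total, which vanishes as $n \to \infty$.

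For the exceptional contribution, by symmetry I focus on $S := \sum_{j : \phi_j > 0} p^{-(1-2\phi_j)t}$. Since $\lambda_j = 2\sqrt{p}\cosh(\phi_j \log p)$ and $|\lambda_j| \leq \lambda < d$, every such $\phi_j$ satisfies $\phi_j \leq \phi_\ast$ where $\phi_\ast := \cosh^{-1}(\lambda/(2\sqrt{p}))/\log p < 1/2$. Introducing the counting function $M(\alpha) := \#\{j : \phi_j \geq \alpha\}$ and applying Stieltjes integration by parts yields
\begin{equation*}
S \;=\; p^{-t}\, M(0^+) \;+\; 2t (\log p) \int_0^{\phi_\ast} p^{-(1-2\alpha)t}\, M(\alpha) \, d\alpha.
\end{equation*}
Substituting the density bound $M(\alpha) \leq C_\varepsilon\, n^{1-2\alpha+\varepsilon}$ together with $p^t \geq n^{1+\eta}$, the integrand is at most $(2t\log p)\, C_\varepsilon\, n^{\varepsilon-\eta}\, n^{2\eta\alpha}$, and the integral in $\alpha$ evaluates to $(n^{2\eta\phi_\ast} - 1)/(2\eta \log n)$. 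Collecting factors gives $S \leq C_\varepsilon \cdot \tfrac{1+\eta}{\eta} \cdot n^{\varepsilon + \eta(2\phi_\ast - 1)}$, plus the smaller boundary piece $C_\varepsilon\, n^{\varepsilon - \eta}$. Choosing $\varepsilon < \eta(1 - 2\phi_\ast)$ makes the dominant exponent strictly negative, so $S \to 0$, and the same argument handles the $\psi_j$ sum.

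The crucial point — and the reason the expander assumption is indispensable — is that the density hypothesis alone permits $\sim n^\varepsilon$ values of $\phi_j$ arbitrarily close to $1/2$, each contributing nearly $1$; without a spectral gap one would only get $S = O(1)$. The expander condition $\lambda < d$ converts the would-be upper limit $1/2$ into the strict bound $\phi_\ast < 1/2$, supplying the exponential gain $n^{\eta(2\phi_\ast - 1)} \to 0$ that forces $I_n$ to zero. The main technical point is verifying this strict separation $\phi_\ast < 1/2$ uniformly along the sequence and tracking the $\varepsilon$-dependence carefully enough to absorb the $n^\varepsilon$ slack from the density hypothesis.
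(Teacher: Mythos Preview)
Your proof is correct and follows essentially the same route as the paper: both use the expander hypothesis to cap $\phi_j \le \tfrac12 - \delta_1$ (your $\phi_\ast$), rewrite the exceptional sum via Stieltjes integration by parts against the counting function $M(\alpha)$, insert the density bound $M(\alpha)\ll_\varepsilon n^{1-2\alpha+\varepsilon}$, and then choose $\varepsilon$ small relative to $\eta(1-2\phi_\ast)$. The only cosmetic slip is that in collecting the factor $\tfrac{1+\eta}{\eta}$ you tacitly set $t=(1+\eta)\log_p n$ rather than $t\ge(1+\eta)\log_p n$; this is harmless since each summand $p^{-(1-2\phi_j)t}$ is monotone decreasing in $t$, so the worst case is $t=(1+\eta)\log_p n$.
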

\begin{proof}
Since $X $ is an expander, we have that there is $\delta_1>0$ such that $0 \leq \phi_j \leq \frac{1}{2} - \delta_1$. We express the sums in $I_n$ in terms of the function $M$ in Definition \ref{S-X};
\[ \sum_{j =1}^{n-1} p^{- [\frac{1}{2}- \phi_j]2t}  =-\int^{\frac{1}{2}- \delta_1}_0 p^{- [\frac{1}{2}- \alpha]2t} dM(\alpha). \]
Integrating by parts, we get that 
\[ \sum_{j =1}^{n-1} p^{- [\frac{1}{2}- \phi_j]2t}  = M(0)p^{-t} - M\left(\frac{1}{2}- \delta_1\right) p^{-2 t \delta_1}   + 2 t \int^{\frac{1}{2}- \delta_1}_0 p^{- [\frac{1}{2}- \alpha]2t} M(\alpha)d \alpha. \]
The density hypothesis asserts that $M(\alpha) \ll_{\varepsilon} n^{1-2 \alpha + \varepsilon}$ for $0\leq \alpha \leq 1/2$. Therefore, 
\begin{align*}
\sum_{j =1}^{n-1} p^{- [\frac{1}{2}- \phi_j]2t}  &\ll_{\varepsilon}n^{1 + \varepsilon}p^{-t}   + 2 t \int^{\frac{1}{2}- \delta_1}_0 p^{- [\frac{1}{2}- \alpha]2t} n^{1-2 \alpha + \varepsilon} d \alpha\\
&= n^{1 + \varepsilon}p^{-t} \left( 1 + 2t \int^{\frac{1}{2}- \delta_1}_0\left( \frac{ p^{2t}}{ n^{2 }} \right)^{\alpha} d \alpha  \right).
\end{align*}
Since $t > \log_p n$, we have that
\begin{align*}
\sum_{j =1}^{n-1} p^{- [\frac{1}{2}- \phi_j]2t}  &\ll_{\varepsilon} n^{1 + \varepsilon}p^{-t} \left( 1 + 2t \left(\frac{p^t}{n} \right)^{1- 2 \delta_1}  \right)\\
& \ll_{\varepsilon} n^{1 + \varepsilon}p^{-t} + 2t n^{\varepsilon} \left( \frac{n}{p^t} \right)^{2 \delta_1}.
\end{align*}
Therefore, 
\begin{align}
\sum_{j =1}^{n-1} p^{- [\frac{1}{2}- \phi_j]2t}  &\ll_{\varepsilon} n^{1 + \varepsilon}p^{-t} + 2t n^{\varepsilon} \left( \frac{n}{p^t} \right)^{2 \delta_1}.
\end{align}
We can get a similar bound for $\sum_{j =1}^{n-1} p^{- [\frac{1}{2}- \psi_j]2t}$. 
Since $\varepsilon>0$ is arbitrarily small and $\delta_1>0$ is fixed, it follows that $\lim_{n \rightarrow \infty}I=0$ if $t\geq(1+\eta) \log_p n $.
\end{proof}

We are now ready to prove Theorem \ref{SX}.
\begin{proof}[Proof of Theorem \ref{SX}]

We recall that when $\lambda_j >2 \sqrt{p}$, we have that $\theta_j= i \phi_j \log p$ for $\phi_j \in (0, 1/2]$. Then,
\begin{equation}\label{c}
\vert \cos (t \theta_j) \vert = \bigg \vert \frac{1}{2} \left( p^{t \phi_j} + p^{-t \phi_j} \right) \bigg \vert \leq p^{t \phi_j} 
\end{equation}
and
\begin{equation}\label{s}
\bigg \vert U_t \left( \frac{\lambda_j}{2 \sqrt{p}} \right) \bigg \vert = \bigg \vert \frac{p^{(t+1)\phi_j} - p^{-(t+1)\phi_j}}{p^{\phi_j} - p^{-\phi_j}}  \bigg \vert \leq p^{(t+2)\phi_j} .
\end{equation}
We can get similar bounds in terms of the $\psi_j$ for the case $\lambda< - 2 \sqrt{p}$.
\begin{align*}  
W(Q_t(A),x)&=p^t \sum_{j \neq 0} \left(\frac{p-1}{p} \frac{\sin ((t+1) \theta_j)}{\sin  \theta_j} + \frac{2}{p} cos (t \theta_j) \right)^2f_j^2(x)
\end{align*}
Summing over $x$ and using the fact that $X$ is homogeneous, we have that
\begin{align} 
\label{v2}W(Q_t(A),x) & \leq   \frac{p^t}{n} \sum_{j =1} \left(\frac{p-1}{p} \frac{\sin ((t+1) \theta_j)}{\sin  \theta_j} + \frac{2}{p} \cos (t \theta_j) \right)^2.
\end{align}
We set
\[  \phi_j'= 
     \begin{cases}
      \phi_j &\quad\text{if} \quad  \lambda_j  > 2 \sqrt{p}, \\
       \psi_j \ &\quad\text{if} \quad  \lambda_j  < - 2 \sqrt{p}.
     \end{cases}
\]
Considering the terms corresponding to all  $\vert \lambda_j \vert \leq 2 \sqrt{p}$ and using equations \eqref{c} and \eqref{s}, we have that
\begin{align} 
\label{v3}W(Q_t(A),x) & \leq  p^t  \left( t+1 \right)^2+ \frac{p^t}{n} \sum_{j =1}^{n-1} \left(\frac{p-1}{p} p^{(t+2)\phi_j'}  + \frac{2}{p}p^{t \phi_j'} \right)^2\\
&\leq  p^t  \left( t+1 \right)^2+\frac{p^t}{n}  \sum_{j =1}^{n-1}  \left(\frac{p+1}{p} p^{(t+2)\phi_j'}\right)^2\\
&\leq  p^t  \left( t+1 \right)^2+3p^2\frac{p^{t}}{n}  \sum_{j =1}^{n-1} p^{2t \phi_j'}.
\end{align} 
Plugging this into the $\ell^2$ bound, we have that
\begin{align}
 d_x(t) & \leq\frac{n^{1/2}}{2(p+1)p^{t-1}}( W(Q_t(A),x))^{1/2}\cr
& \leq \frac{1}{2} \left(np^{-t}  \left( t+1 \right)^2+3 p^2\sum_{j =1}^{n-1} p^{- [\frac{1}{2}- \phi_j']2t} \right)^{1/2},
\end{align}
for every $x \in X$. Lemma \ref{eps} finishes the proof of Theorem \ref{SX}.
\end{proof}
\begin{remark}
In Theorem \ref{SX}, if $X$ is not homogeneous then in as much as we summed over all $x \in X$ in the proof, the result remains true for almost all $x$ in place of all $x$.
\end{remark}

\section{Conjecture \ref{conj}}
We end with some comments about our conjectured asymptotics  of the variance for Ramanujan graphs. The spectral expansion \eqref{var}  gives that 
\begin{align}
W_2(t) :&= \frac{1}{n} \sum_{x \in X} W(Q_t,x )\cr
&=\sum_{j \neq 0} Q_t^2(2 \sqrt{p} \cos \theta_j)\cr
&\label{61}= \frac{p^t}{n}  \sum_{j \neq 0} R_t^2( \theta_j),
\end{align}
where $R_t= \frac{p-1}{p} U_t+ \frac{2}{p} T_t$. We write \eqref{61} as
\begin{equation}\label{63}
W_2(t)= p^t \mu_X(R_t^2),
\end{equation}
where $\mu_X$ is the density of the eigenvalues on $[0, \pi]:$
\[\mu_X= \frac{1}{n} \sum_{j \neq  0 } \delta_{\theta_j}.\]
For any sequence of Ramanujan graphs $X$, $\mu_X$ is known to converge to the Plancherel measure $\nu_p,$ as $n \rightarrow \infty$ \cite{AGV}. That is for a fixed  polynomial $R$ 
\[ \mu_X(R) \rightarrow \int_{0}^{\pi} R(\theta) d \nu_{p}(\theta)\]
as $n \rightarrow \infty$. Here the Plancherel, or Kesten measure, $\nu_p$ is;
\begin{equation}\label{new}
d \nu_p= \frac{2(p+1) \sin^2 \theta}{\pi[(p^{1/2} + p^{-1/2})^2 - 4 \cos ^2\theta]} d \theta.
\end{equation}

For $X$'s whose girth is at least $\delta \log_p n$, the calculation in Section \ref{bd}, which was used to establish the bounded window for these, yields that for $t<g/5,$
\[W_2(t) \sim (p+1)p^{t-1},\]
as $n\rightarrow \infty$. 
Hence for these $X$'s and in this range of $t$'s
 \begin{equation} 
\label{meas}\mu_{X}(R^2_t) \sim \frac{p+1}{p} ,\end{equation}
as $n \rightarrow \infty$. 
One can check that the $R_t$'s are orthogonal polynomials for the measure $\nu_p$ on $[0,\pi] $ (see \cite{CA} for example) and that for $t \geq 1$
\begin{equation}\label{normal}
\int^{\pi}_0 R^2_t(\theta) d \nu_p(\theta)= \frac{p+1}{p}
\end{equation}

Thus \eqref{meas} reads that the large girth $X$'s and in the range $t< g/5$ 
\begin{equation} \label{69}\mu_X(R^2_t) \sim \nu_p(R^2_t)= \frac{p+1}{p},\end{equation}
as $n \rightarrow \infty$. Our conjecture is that \eqref{69}  holds in general for any sequence of Ramanujan graphs and in the larger range $t< 2 \log_p n$. From \eqref{63} the conjecture is equivalent to 
\[W_2(t) \sim N(t),\]
for $t< 2 \log_p n$ as $n \rightarrow \infty$.
In the forthcoming paper \cite{SZ}, Conjecture \ref{conj} is proven for various families of arithmetical Ramanujan graphs, such as the ones discussed in \cite{CFLMP}.

\section{Acknowledgements} We would like to thank Eyal Lubetzky for his comments and insights concerning cutoff for the NBRW.
\bibliographystyle{plain}
\bibliography{Ram_window}
\end{document}